\theoremstyle{plain}
\newtheorem{prop}{Proposition}
\newtheorem{thm}[prop]{Theorem}
\newtheorem{lemma}[prop]{Lemma}
\theoremstyle{remark}
\newtheorem{remark}[prop]{Remark}
\theoremstyle{definition}
\newcommand{\A}{{\mathbb A}}
\newcommand{\Z}{{\mathbb Z}}
\newcommand{\G}{{\mathbb G}}
\newcommand{\PP}{{\mathbb P}}
\newcommand{\cE}{{\mathcal E}}
\newcommand{\cF}{{\mathcal F}}
\newcommand{\cG}{{\mathcal G}}
\newcommand{\cO}{{\mathcal O}}
\newcommand{\eqto}{\stackrel{\lower1.5pt\hbox{$\scriptstyle\sim\,$}}\to}
\DeclareMathOperator{\Spec}{Spec}
\DeclareMathOperator{\Proj}{Proj}
\DeclareMathOperator{\Pic}{Pic}
\DeclareMathOperator{\Br}{Br}
\DeclareMathOperator{\cores}{cores}
\DeclareMathOperator{\image}{im}
\begin{document}
\title[Models of quadric surface bundles]{Models of quadric surface bundles}

\author{Denis Levchenko}
\date{\today}
\address{Institute of Mathematics, University of Zurich, Winterthurerstrasse 190, 8057 Zurich, Switzerland}

\email{denis.levchenko@math.uzh.ch}

\begin{abstract}
We establish the existence of models of quadric surface bundles
with prescribed \'etale local forms.
\end{abstract}

\maketitle

\section{Introduction}
Quadric surface bundles are objects of classical study \cite{staude}.
A special role is played by quadric surface bundles over surfaces
in diverse settings, e.g., \cite{voisin}, \cite{beauville}, \cite{vangeemen},
including questions about (stable) rationality
\cite{ABP}, \cite{HPT}, \cite{schreieder}.

One is interested in the structure of quadric surface bundles,
such as the characterization of smooth models.
Over a surface base, such a result is given in \cite{APS}.

Our main result allows a base of arbitrary dimension.

\begin{thm}
\label{thm.main}
Let $k$ be a perfect field of characteristic different from $2$,
$S$ a smooth projective variety over $k$ of dimension $n$,
and $\pi\colon X\to S$ a morphism of projective varieties which, after base
change $X_{k(S)}\to \Spec(k(S))$ to the generic point of $S$ yields a
smooth quadric surface $X_{k(S)}$.
We assume, over $k$, that resolution of marked ideals on smooth varieties
of dimension $n$ is available.
Then there exists a commutative diagram
\[
\xymatrix{
X' \ar@{-->}[r]^{\varrho_X} \ar[d]_{\pi'} & X \ar[d]^{\pi} \\
S' \ar[r]^{\varrho_S} & S
}
\]
such that
\begin{itemize}
\item $\varrho_S$ is a birational morphism of smooth projective varieties,
\item $\varrho_X$ is a birational map that restricts to an isomorphism over
the generic point of $S$,
\item $\pi'$ is projective and flat and admits a factorization through the
projectivization of a rank $4$ vector bundle on $S'$,
in an \'etale neighborhood
of any point of $S$ taking on one of the normal forms given in Table \ref{table.normalforms}.
\end{itemize}
\end{thm}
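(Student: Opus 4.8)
The plan is to realize the bundle as a single quadratic form on a rank $4$ bundle, principalize its discriminant using the resolution hypothesis, and then normalize the form \'etale-locally along the resulting simple normal crossings locus. Since $\operatorname{char} k \neq 2$, the smooth quadric surface $X_{k(S)}$ is cut out by a nondegenerate quadratic form $q_0$ in four variables over $k(S)$, and quadratic and symmetric bilinear forms may be used interchangeably. First I would spread $q_0$ out: over a suitable open $U \subseteq S$ choose a rank $4$ bundle $\cE$ and a line bundle $\cL$, and extend $q_0$ to a section $q \in \Gamma\bigl(U, \Sym^2 \cE^\vee \otimes \cL\bigr)$ cutting out a quadric bundle $X_U \subseteq \PP(\cE)$ that agrees with $X$ over the generic point. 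This is legitimate because $\varrho_X$ need only restrict to an isomorphism there. The discriminant $\operatorname{disc}(q) \in \Gamma\bigl(U, (\det \cE^\vee)^{\otimes 2} \otimes \cL^{\otimes 4}\bigr)$ has zero locus the degeneration divisor $\Delta$, which avoids the generic point since $q_0$ is nondegenerate.

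Next I would resolve. Extending $\cE$, $\cL$, and $q$ across all of $S$ (taking reflexive hulls or scheme-theoretic closures) produces a marked ideal supported on $\Delta$ together with the locus of indeterminacy in $S \setminus U$. Invoking the standing hypothesis that resolution of marked ideals is available in dimension $n$, I principalize this ideal: there is a birational morphism $\varrho_S \colon S' \to S$ of smooth projective varieties, a composite of blow-ups in smooth centers, after which the total transform of $\Delta$ becomes a simple normal crossings divisor $\sum_i \{t_i = 0\}$ and the pulled-back form is everywhere defined. The induced rational map on total spaces is the desired $\varrho_X$, an isomorphism over the generic point by construction.

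The heart of the matter is the local normalization. Working in an \'etale (or henselian) neighborhood of a point $p \in S'$ where the $t_i$ extend to a regular system of parameters, I would bring the form over the regular local ring to diagonal shape $\langle u_0 \prod_i t_i^{a_{0i}}, \dots, u_3 \prod_i t_i^{a_{3i}}\rangle$ with $u_j$ units and exponents $a_{ji} \geq 0$; this is possible because $2$ is a unit and the ring is a unique factorization domain, so square classes are governed by vanishing orders together with residual square classes. Absorbing even powers of each $t_i$ into the square class of the corresponding entry (a change of basis on $\cE'$, i.e.\ a twist by a reflexive subsheaf) reduces every exponent to $0$ or $1$. In the mixed cases, where several entries vanish along a common component, I would perform a residual blow-up analysis, principalizing the resulting ideals under the same hypothesis, until the configuration stabilizes to one of the finitely many patterns of Table \ref{table.normalforms}; the residue units $u_j$ are then pinned down to standard representatives by Hensel's lemma. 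Flatness of $\pi'$ is then automatic: $X'$ is fiberwise an honest quadric hypersurface in $\PP(\cE')$, hence of constant fiber dimension $2$ since $q$ is nowhere identically zero on a fiber, and a family of hypersurfaces of constant degree and dimension over a reduced base is flat.

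I expect the principal obstacle to be the mixed-degeneration bookkeeping in this last step: one must ensure that the interaction of the discriminant with the Clifford class in $\Br\bigl(k(S)\bigr)[2]$—which governs whether a corank-one or corank-two degeneration is split—produces only the tabulated normal forms, and that the auxiliary blow-ups needed to separate overlapping components terminate. Controlling the ramification of the even Clifford algebra along each $t_i$ and matching it against the geometry of the degenerate fibers is where the genuine work lies; the surface case recorded in \cite{APS} serves as the model for the admissible local pictures.
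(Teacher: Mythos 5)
Your strategy --- spread out a line-bundle-valued quadratic form on a rank $4$ bundle, principalize its discriminant using the resolution hypothesis, and then diagonalize \'etale-locally --- is the natural higher-dimensional analogue of \cite{APS} and is genuinely different from the paper's route, which never manipulates quadratic forms directly. The paper instead descends the quaternion class $\alpha$ of the ruling to $k(S)$ (Lemma \ref{lem.quaterniondescend}), extends it to a sheaf of Azumaya algebras on an iterated root stack of a modification of $S$ via resolution of marked ideals and Raynaud--Gruson flatification (Lemma \ref{lem.reduction}), forms a smooth $\PP^1\times\PP^1$-fibration over the root stack by Weil restriction, and produces $X'$ by an explicit blow-up/contraction/descent in the style of the Brauer--Severi surface bundle construction, reading off Table \ref{table.normalforms} from explicit $\mu_2$-equivariant coordinate models in Section \ref{sec.local}. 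As written, though, your argument has genuine gaps at its two central steps.

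First, the diagonalization claim is false as stated: over a regular local ring of dimension $\ge 2$, a symmetric bilinear form that is nondegenerate at the generic point need not be congruent to any diagonal form, let alone one with entries that are units times monomials. For example, over $k[[t_1,t_2]]$ the form $\left(\begin{smallmatrix} t_2 & t_1\\ t_1 & 0\end{smallmatrix}\right)\perp\langle 1,-1\rangle$ has smooth generic fiber and discriminant $t_1^2$ (already a simple normal crossings monomial), yet comparing determinants with represented values in the UFD $k[[t_1,t_2]]$ shows the first block is not diagonalizable. You are of course free to choose a \emph{different} integral model of $q_0$, but producing one that diagonalizes with monomial entries everywhere is then the substance of the theorem, not a consequence of ``$2$ is a unit and the ring is a UFD.'' Second, even granting a diagonal form $\langle u_0m_0,\dots,u_3m_3\rangle$ with squarefree monomials $m_j$, the reduction to the eight entries of Table \ref{table.normalforms} --- which forces at most three parameters to appear, in very particular exponent patterns reflecting the interplay between the discriminant cover and the ramification of the Clifford/quaternion class --- is deferred to an unspecified ``residual blow-up analysis \dots until the configuration stabilizes,'' with no termination argument and no identification of the stable configurations. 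This is exactly the work the paper does via the root-stack reductions (emptiness of triple intersections of ramification components, faithfulness of the $\mu_2\times\mu_2$-representations at double intersections) and the explicit contraction maps of Section \ref{sec.local}, so the proposal stops short of the theorem's actual content. (Two smaller points: the units $u_j$ are normalized by passing to a further \'etale cover extracting square roots, not by Hensel's lemma over the base; and your flatness argument via constant Hilbert polynomial is fine, but only once one has arranged that the diagonal entries have no common factor.)
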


\begin{table}
\begin{align*}
K^2-L^2+M^2-N^2&=0,  \\
t_1K^2-L^2+M^2-N^2&=0\ \ \ (n\ge 1),  \\
t_1K^2-t_1L^2+M^2-N^2&=0\ \ \ (n\ge 1),  \\
t_1t_2K^2-t_2L^2+M^2-N^2&=0\ \ \ (n\ge 2),  \\
t_1K^2-L^2+t_2M^2-t_2N^2&=0\ \ \ (n\ge 2),  \\
t_1t_2K^2-t_1L^2+M^2-t_2N^2&=0\ \ \ (n\ge 2),  \\
t_1t_2t_3K^2-t_2L^2+M^2-t_3N^2&=0\ \ \ (n\ge 3),  \\
t_1t_2K^2-t_2t_3L^2+t_3M^2-N^2&=0\ \ \ (n\ge 3). 
\end{align*}
\caption{\'Etale local forms of a
quadric surface bundle in Theorem \ref{thm.main}, over affine space
$\A^n=\Spec(k[t_1,\dots,t_n])$ over the given field $k$,
with projective coordinates $K$, $L$, $M$, $N$}
\label{table.normalforms}
\end{table}

Another point of view, taken in \cite{AB}, takes as its starting point
the embedding of a quadric surface by its anticanonical linear system
as a degree $8$ del Pezzo surface in $\PP^8$, or
\emph{involution surface};
an involution surface is isomorphic to a quadric surface if and only if it
admits a line bundle whose class is half the anticanonical class.
Over a surface base,
involution surface bundles are described in \cite{involution}.

In Section \ref{sec.preliminaries} we recall basic facts about quadric surfaces
and involution surfaces and prove some preliminary results.
In Section \ref{sec.reduction} we give some reduction steps toward the
proof of Theorem \ref{thm.main}.
Section \ref{sec.construction} carries out the construction
of $\widetilde{X}$ and $\widetilde{S}$ in Theorem \ref{thm.main}.
In Section \ref{sec.local} the \'etale local form is justified,
completing the proof of Theorem \ref{thm.main}.
Appendix \ref{app.nonflat} shows that the construction of involution surface
bundles in
\cite{involution} may lead to a non-flat model when attempted
over a base of dimension $>2$.
\subsection*{Acknowledgements}
The author would like to thank his PhD advisor Andrew Kresch for suggesting the study of quadric surface bundles and for the many discussions that followed.
\section{Preliminaries}
\label{sec.preliminaries}
The smooth quadric surface $X_{k(S)}$ uniquely determines a degree
$2$ \'etale $k(S)$-algebra $k(T)$ and an element $\alpha$ of the
Brauer group $\Br(k(T))$, the class of a quaternion algebra that
corestricts to $0$ in $Br(k(S))$ \cite[Example 3.3]{AB}.
Without the condition on corestriction we get a description of a
general involution surface over $k(S)$:
\[ X_{k(S)}\cong R_{k(T)/k(S)}\mathrm{SB}(\alpha), \]
where $\mathrm{SB}(\alpha)$ denotes the Severi-Brauer variety,
the plane conic associated with the
quaternion algebra whose class is $\alpha$,
and $R_{k(T)/k(S)}$ denotes Weil restriction.

The condition on corestriction is an instance of a more general fact
about Weil restrictions of Brauer-Severi varieties.
Geometrically, a Brauer-Severi variety gives rise, upon Weil restriction,
to a variety which is geometrically a product of several copies of
projective space.
On such a variety, a line bundle has a geometric multidegree, a
sequence of integers that is well-defined up to order.

\begin{lemma}
\label{lem.corestriction}
Let $K$ be a field, $L$ a finite \'etale $K$-algebra,
$\alpha\in \Br(L)$, and
$S$ a Brauer-Severi variety over $L$ for $\alpha$.
Set $\beta:=\cores_{L/K}(\alpha)$, and let $d$ be an integer.
Then $d\cores_{L/K}(\alpha)=0$ if and only if
$R_{L/K}(S)$ possesses a line bundle
of geometric multidegree $(d,\dots,d)$.
\end{lemma}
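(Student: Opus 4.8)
The plan is to compute the Picard group of $V:=R_{L/K}(S)$ by Galois descent and to identify the obstruction to descending the diagonal class with $d\beta$. Fix a separable closure $K^{\mathrm{sep}}$ and set $\Gamma:=\Gal(K^{\mathrm{sep}}/K)$. Writing $L\cong\prod_i L_i$ as a product of finite separable field extensions of $K$, we have $S=\coprod_i S_i$ and $V=\prod_i R_{L_i/K}(S_i)$, and base change to $K^{\mathrm{sep}}$ gives $V_{K^{\mathrm{sep}}}\cong\prod_{\omega\in\Omega}\PP^{r_\omega-1}_{K^{\mathrm{sep}}}$, where $\Omega:=\Hom_K(L,K^{\mathrm{sep}})$, on which $\Gamma$ acts by permuting the factors through its action on $\Omega$, twisted on each factor by the Severi--Brauer cocycle of $S$. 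Since $V$ is smooth, projective and geometrically integral, the global units are constant, so Hilbert's Theorem 90 collapses the low-degree terms of the Hochschild--Serre spectral sequence to the exact sequence
\[
0\to\Pic(V)\to\Pic(V_{K^{\mathrm{sep}}})^{\Gamma}\xrightarrow{\ \partial\ }\Br(K).
\]
Here $\Pic(V_{K^{\mathrm{sep}}})=\bigoplus_i\Z^{\Omega_i}$ is a permutation $\Gamma$-module, and a line bundle of geometric multidegree $(d,\dots,d)$ is exactly the diagonal element $\delta_d:=(d,\dots,d)$, which is manifestly $\Gamma$-invariant. Thus the Lemma reduces to the identity $\partial(\delta_d)=d\beta$, since then $\delta_d$ lifts to $\Pic(V)$ if and only if $d\cores_{L/K}(\alpha)=0$.

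The computation of $\partial(\delta_d)$ is the heart of the matter, and it is here that the corestriction enters. I would first reduce to the case that $L$ is a field, the general case following from additivity of $\partial$ over the factors $L_i$ together with the formula $\cores_{L/K}=\sum_i\cores_{L_i/K}$ for \'etale algebras. With $L$ a field, $\Gamma$ acts transitively on $\Omega=\Gamma/\Gamma_L$, where $\Gamma_L:=\Gal(K^{\mathrm{sep}}/L)$; the invariants $(\Z^{\Omega})^{\Gamma}$ are precisely the diagonal classes, and $\Pic(V_{K^{\mathrm{sep}}})=\mathrm{Ind}_{\Gamma_L}^{\Gamma}\Z$. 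For the single Severi--Brauer variety $S=\mathrm{SB}(A)$ over $L$, Lichtenbaum's sequence furnishes a transgression $\partial_L\colon\Pic(S_{K^{\mathrm{sep}}})=\Z\to\Br(L)$ with $\partial_L(d)=d\alpha$. The claim I must establish is the commutativity
\[
\partial(\delta_d)=\cores_{L/K}\bigl(\partial_L(d)\bigr)=\cores_{L/K}(d\alpha)=d\beta,
\]
that is, under the Shapiro identification $\Pic(V_{K^{\mathrm{sep}}})^{\Gamma}=H^0(\Gamma,\mathrm{Ind}_{\Gamma_L}^{\Gamma}\Z)\cong H^0(\Gamma_L,\Z)$ carrying $\delta_d\mapsto d$, the transgression for $V$ factors as the corestriction applied to the transgression for $S$.

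I expect this compatibility of transgression with Shapiro's lemma and corestriction to be the main obstacle, precisely because the global units on the connected variety $V_{K^{\mathrm{sep}}}$ form $K^{\mathrm{sep},*}$ rather than an induced module, so Shapiro's lemma does not apply to the coefficient sheaf and the transfer $\cores$ is forced to appear. I would prove it by a direct cocycle computation: represent $\alpha\in H^2(\Gamma_L,K^{\mathrm{sep},*})$ by the $2$-cocycle measuring the failure of the Severi--Brauer cocycle of $S$ to lift from $\mathrm{PGL}_r$ to $\mathrm{GL}_r$ on the tautological bundle $\cO(1)$; transport this, along a choice of coset representatives for $\Gamma/\Gamma_L$, to the line bundle $\boxtimes_{\omega}\cO(d)$ on $V_{K^{\mathrm{sep}}}$, using that this bundle is symmetric under the factor permutations so that the permutation part of the cocycle contributes trivially; and verify that the resulting $2$-cocycle on $\Gamma$ is the standard corestriction of $d\alpha$. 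This is also the conceptual shadow of the adjunction between $R_{L/K}$ and base change $(-)_L$.

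As a consistency check, the anticanonical class $-K_V$ has geometric multidegree $(r,\dots,r)$ and is defined over $K$, in agreement with $r\beta=\cores_{L/K}(r\alpha)=0$, since $r=\deg A$ is a multiple of the order of $\alpha$; this already exhibits the inclusion $r\Z\subseteq\{d:\delta_d\text{ lifts}\}$ that the full computation refines to $\ord(\beta)\Z$. Combining the identity $\partial(\delta_d)=d\beta$ with the displayed exact sequence then yields the stated equivalence.
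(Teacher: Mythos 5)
Your argument takes a genuinely different route from the paper's. You reduce the lemma, via the Hochschild--Serre sequence $0\to\Pic(V)\to\Pic(V_{K^{\mathrm{sep}}})^{\Gamma}\to\Br(K)$, to the single identity $\partial(\delta_d)=\pm d\,\cores_{L/K}(\alpha)$, and you propose to prove that identity by an explicit $2$-cocycle computation showing that the transgression for the Weil restriction is the corestriction of the transgression for $S$. The paper instead makes the corestriction appear geometrically: it uses the Knus--Ojanguren norm construction to produce an actual central simple algebra over $K$ with class $\beta$, takes its Brauer--Severi variety $T$, and embeds $R_{L/K}(S)$ into $T$ by the twisted Segre embedding; functoriality of the two exact sequences \eqref{eqn.lerayT} and \eqref{eqn.lerayR}, together with Giraud's computation that the ample generator of $\Pic(T_{K^{\mathrm{sep}}})$ transgresses to $-\beta$, then yields $\partial(1,\dots,1)=-\beta$ with no cocycle manipulation at all. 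Your diagnosis of why the transfer must appear (the units on the connected variety $V_{K^{\mathrm{sep}}}$ form $K^{\mathrm{sep},*}$ rather than an induced module, so Shapiro's lemma does not apply to the coefficients) is exactly right, and the compatibility you need is true; but as written it is asserted and sketched rather than proved, and it is precisely the computation that the paper's twisted Segre embedding is designed to bypass. So your approach is viable and more self-contained in spirit --- it never needs the norm algebra or its Brauer--Severi variety --- but it is complete only once the corestriction-of-cocycles verification is actually carried out (or cited), whereas the paper trades that verification for a geometric construction plus a reference to Giraud.
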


For a Brauer-Severi variety $T\to \Spec(K)$,
the Leray spectral sequence and the fact that $T$, geometrically a
projective space, has geometric Picard group $\Z$ acted upon trivially by
the absolute Galois group of $K$ lead to an exact sequence
\begin{equation}
\label{eqn.lerayT}
0\to \Pic(T)\to \Z\to \ker(\Br(K)\to \Br(T))\to 0
\end{equation}
(see, for instance, \cite[\S 1]{CTO}).
By \cite[V.4.8.3]{giraud},
the element $1\in \Z$ corresponding to the ample generator of the
geometric Picard group of $T$ maps
to minus the class in $\Br(K)$, of which $T$ is a Brauer-Severi variety.

\begin{proof}
To $S$ there is a corresponding central simple algebra over $L$.
A construction in \cite{KO} (see also \cite{ferrand}),
which generalizes the norm of an invertible sheaf \cite[\S 6.5]{EGA2},
supplies the norm algebra, a central simple algebra over $K$
with the Brauer class $\beta$.
We take $T$ to be the associated Brauer-Severi variety.
There is the \emph{twisted Segre embedding}
\[ R_{L/K}(S)\to T, \]
described in \cite{artinbrauerseveri}
in the case $L=K\times\dots\times K$
and straightforward to obtain in the present generality using the functoriality of the
norm construction of \cite{KO}.

Suppose $d\beta=0$.
By \eqref{eqn.lerayT}, there exists a
line bundle on $T$ of geometric degree $d$.
The restriction to $R_{L/K}(S)$ has geometric multidegree $(d,\dots,d)$.

For the converse assertion, we suppose that
$R_{L/K}(S)$ possesses a line bundle of geometric multidegree $(d,\dots,d)$.
Analogous to \eqref{eqn.lerayT} is an exact sequence
\begin{equation}
\label{eqn.lerayR}
0\to \Pic(R_{L/K}(S))\to P\to \ker(\Br(K)\to \Br(R_{L/K}(S)))\to 0;
\end{equation}
where $P\subset \Z^{[L:K]}$ is the group of Galois-invariant multidegrees on
$R_{L/K}(S)$.
By hypothesis, $(d,\dots,d)$ lies in image of the left-hand map of
\eqref{eqn.lerayR}.
We have functorial maps from \eqref{eqn.lerayT} to \eqref{eqn.lerayR},
where $1\in \Z$ is mapped to $(1,\dots,1)\in P$.
This, in turn, is sent by the right-hand map of \eqref{eqn.lerayR} to $-\beta$.
So $d\beta=0$.
\end{proof}

\begin{remark}
\label{rem.arithmeticinvolution}
In \cite[Table 3]{AB} an exhaustive list of
arithmetic possibilities in the case of an involution surface
($[L:K]=2$, $S$ a conic) is given.
\end{remark}

\begin{lemma}
\label{lem.quaterniondescend}
Let $K$ be a field of characteristic different from $2$ with
quadratic extension $L/K$ and class
$\alpha\in \Br(L)$ of a quaternion algebra on $L$.
Then $\cores_{L/K}(\alpha)=0$ if and only if
$\alpha$ is equal to the restriction of the class of a
quaternion algebra on $K$.
\end{lemma}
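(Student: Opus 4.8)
The plan is to treat the two implications separately, working throughout with the identification $H^2(F,\mu_2)=\Br(F)[2]$ available for any field $F$ of characteristic $\ne 2$, under which the classes of quaternion algebras correspond exactly to the symbols $(a,b)=(a)\cup(b)$. The implication that a restricted quaternion class has trivial corestriction is immediate: if $\alpha=\operatorname{res}_{L/K}(\gamma)$ with $\gamma$ the class of a quaternion algebra on $K$, then $\cores_{L/K}(\alpha)=\cores_{L/K}\operatorname{res}_{L/K}(\gamma)=[L:K]\,\gamma=2\gamma$, which vanishes since the class of a quaternion algebra is $2$-torsion (as $Q\otimes_K Q\cong M_4(K)$). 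This uses only the standard identity $\cores\circ\operatorname{res}=[L:K]$.

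For the converse, write $L=K(\sqrt d)$ with $d\in K^\times$ and let $(d)\in H^1(K,\mu_2)$ be its class. I would use the exact sequence attached to the quadratic extension, obtained from the short exact sequence of $G_K$-modules $0\to\mu_2\to\operatorname{Ind}_{G_L}^{G_K}\mu_2\to\mu_2\to 0$ together with Shapiro's lemma $H^i(K,\operatorname{Ind}_{G_L}^{G_K}\mu_2)\cong H^i(L,\mu_2)$. The associated long exact sequence reads
\[
\cdots\to H^2(K,\mu_2)\xrightarrow{\operatorname{res}}H^2(L,\mu_2)\xrightarrow{\cores}H^2(K,\mu_2)\xrightarrow{\cup(d)}H^3(K,\mu_2)\to\cdots,
\]
the outer maps being restriction and corestriction and the connecting map being cup product with $(d)$. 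Exactness at the middle term gives $\ker(\cores)=\image(\operatorname{res})$, so from $\cores_{L/K}(\alpha)=0$ I obtain $\alpha=\operatorname{res}_{L/K}(\beta)$ for some $\beta\in\Br(K)[2]$.

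It remains to arrange that $\beta$ be a quaternion class. The available freedom is that $\beta$ is determined only modulo $\ker(\operatorname{res})$, which by exactness at the previous spot equals $(d)\cup H^1(K,\mu_2)=\{(d,c):c\in K^\times\}$; every element of this coset restricts to the same class $\alpha$. Since $\alpha$ has index $\le 2$ over $L$, the index of $\beta$ over $K$ divides $[L:K]\cdot\operatorname{ind}_L(\alpha)$, so $\operatorname{ind}_K(\beta)\in\{1,2,4\}$. If it is $\le 2$, then $\beta$ is already the class of a quaternion algebra and we are done.

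The remaining case $\operatorname{ind}_K(\beta)=4$ is the main obstacle. Here the underlying division algebra $D$ is a biquaternion algebra with $\operatorname{ind}(D\otimes_K L)=2=\deg(D)/[L:K]$, so by the double centralizer theorem $L$ embeds in $D$; I would then invoke Albert's structure theorem for biquaternion algebras containing a prescribed quadratic subfield to write $D\cong(d,x)_K\otimes_K Q$ for some $x\in K^\times$ and quaternion algebra $Q$ over $K$. Then $[Q]=\beta+(d,x)$ lies in the coset above, is the class of a quaternion algebra, and restricts to $\alpha$ because $(d,x)$ is split by $L$. This biquaternion descent is the crux: the cohomological argument only produces a Brauer class, and the real content is that this class can be corrected, using the splitting symbols $(d,c)$, to an actual quaternion class without altering its restriction.
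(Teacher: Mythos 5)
Your argument is correct and follows essentially the same route as the paper: the identical Kummer/Shapiro exact sequence produces a $2$-torsion class $\beta\in\Br(K)$ restricting to $\alpha$, and the only substantive case is when $\beta$ has index $4$. The single point of divergence is that you invoke, as a black box, Albert's decomposition theorem for a biquaternion division algebra containing the prescribed quadratic subfield $K(\sqrt{d})$, whereas the paper proves exactly that statement via Albert forms (the subform theorem for isotropy over quadratic extensions together with the similarity criterion for Albert forms); since the theorem you cite is genuine and its standard proof is precisely the paper's argument, this is a presentational rather than a mathematical difference.
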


\begin{proof}
Restriction followed by corestriction is multiplication by $2$.
So,
if $\alpha$ is obtained by restriction from the class of a
quaterion algebra on $K$, then $\cores_{L/K}(\alpha)=0$.
It remains to show, under the assumption that
$\cores_{L/K}(\alpha)=0$, that $\alpha$ is the restriction of the class of a
quaternion algebra on $K$.
We assume $\alpha\ne 0$.

The sequence of cohomology of the Kummer sequence
identifies $H^2(\Spec(K),\mu_2)$ with the
$2$-torsion in the Brauer group $\Br(K)[2]$.
The restriction of scalars $R_{L/K}(\mu_2)$ fits in an exact sequence
\[ 0\to \mu_2\to R_{L/K}(\mu_2)\to \mu_2\to 0. \]
Exactness of the direct image functor under
$\Spec(L)\to \Spec(K)$ \cite[II.3.6]{milne} and the Leray spectral sequence
identify
$H^2(\Spec(K),R_{L/K}(\mu_2))$ with
$H^2(\Spec(L),\mu_2)$, and the long exact sequence of
cohomology identifies the image of the restriction map
\[ \Br(K)[2]\to \Br(L)[2] \]
with the kernel of the corestriction map on $2$-torsion Brauer classes.

Let $\beta\in \Br(K)[2]$, restricting to $\alpha\in \Br(L)$.
Then $\beta$ is a $2$-torsion Brauer class of index $2$ or $4$,
hence is the class of a quaternion division algebra, in which case we are done,
or a biquaterion division algebra, which we suppose now.

Let $d\in K^\times\smallsetminus (K^\times)^2$ be such that
$L\cong K(\sqrt{d})$.
We claim that there exist $a$, $b$, $c\in K^\times$ such that
$\beta=(a,b)+(c,d)$.
The claim implies the lemma, since then
the Brauer class of the quaternion algebra $(a,b)$
restricts to $\alpha$.

According to \cite[Prop.\ 3.2.1]{kahnformesquadratiques},
if an anisotropic quadratic form $q$ over $K$ becomes
isotropic over $K(\sqrt{d})$, then $q$ is similar to
$\langle 1,-d\rangle\perp q'$ for some quadratic form $q'$.
The \emph{Albert form} attached to a pair of quaternion algebras
is anisotropic if and only if the corresponding biquaternion algebra is a
division algebra.
By \cite[Thm.\ 8.1.13]{kahnformesquadratiques},
two Albert forms are similar if and only if they come from the same Brauer class.

We now prove the claim.
Since $\beta$ is the class of a biquaternion division algebra,
restricting to the class of a quaternion algebra in $\Br(L)$,
the Albert form of $\beta$ is anisotropic but becomes isotropic over $L$.
So the Albert form is similar to
$\langle 1,-d,-p,q,r,-dpqr\rangle$ for some $p$, $q$, $r\in K^\times$,
which is similar to the Albert form of
\[ (p,d)\qquad\text{and}\qquad (dpq,dpr). \]
The claim now follows from the result about similarity of Albert forms.
\end{proof}

In Theorem \ref{thm.main},
the existence of a factorization
$\widetilde{X}\to \PP(\cE)\to \widetilde{S}$ of $\tilde\pi$
is claimed, with $\cE$ a rank $4$ vector bundle on $\widetilde{S}$.
Then, on $\PP(\cE)$ we have the line bundles
$\cO_{\PP(\cE)}(n)$, $n\in \Z$, giving rise to line bundles
$\cO_{\widetilde{X}}(n)$ on $\widetilde{X}$ by restriction.
The relative anticanonical sheaf $\omega_{\widetilde{X}/\widetilde{S}}$ is
isomorphic to the tensor product of
$\cO_{\widetilde{X}}(-2)$ and
a line bundle pulled back from $\widetilde{S}$.
So our attention is drawn to a line bundle on $\widetilde{X}$
whose square is isomorphic to
$\omega_{\widetilde{X}/\widetilde{S}}^\vee\otimes \tilde\pi^*\mathcal{M}$ for some
line bundle $\mathcal{M}$ on $\widetilde{S}$.

With attention to the extra data of a line bundle on $\widetilde{X}$
we prove the following extension of \cite[Lemma 2.8]{kreschtschinkelbssurf}.
There,
$A$ is a Henselian ring with residue field $\kappa$, and $X$ and $Y$ schemes, smooth and projective over $\Spec(A)$
with finite group
$G$ of order invertible in $\kappa$, acting on
$X$, $Y$, and (compatibly) on $A$,
with trivial action on $\kappa$.
Under the assumption
\begin{equation}
\label{eqn.H1vanishing}
H^1(X_{\kappa}, T_{X_{\kappa}})=0,
\end{equation}
it is shown that every $G$-equivariant isomorphism
$\phi: X_{\kappa} \to Y_{\kappa}$ arises by restriction from a
$G$-equivariant isomorphism $\psi: X \to Y$ over $\Spec(A)$.
We give a version with line bundles as extra data.

\begin{lemma} \label{lem.ext}
With the above notation and, besides hypothesis \eqref{eqn.H1vanishing}, also
\begin{equation}
\label{eqn.moreH1vanishing}
H^1(X_{\kappa}, \cO_{X_{\kappa}})=0,
\end{equation}
suppose that we further have line bundles $L_X$ on $X$ and $L_Y$ on $Y$,
together with compatible $G$-actions on them and a $G$-equivariant isomorphism
$L_X|_{X_{\kappa}} \cong \phi^*(L_Y|_{Y_{\kappa}})$.
Then there is also a $G$-equivariant isomorphism $L_X\cong \psi^*L_Y$ of line bundles on $X$.
\end{lemma}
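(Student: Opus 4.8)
The plan is to dispose of the scheme-theoretic content by invoking the cited result, thereby reducing the statement to a comparison of two line bundles on $X$, and then to establish the required isomorphism by infinitesimal lifting followed by algebraization, keeping track of the $G$-action throughout. First I would apply the version of \cite[Lemma 2.8]{kreschtschinkelbssurf} recalled above, whose hypothesis \eqref{eqn.H1vanishing} is in force, to produce the $G$-equivariant isomorphism $\psi\colon X\to Y$ over $\Spec(A)$ restricting to $\phi$. Pulling back the equivariant line bundle $L_Y$ along the equivariant map $\psi$ yields a $G$-equivariant line bundle $\psi^*L_Y$ on $X$, and the hypotheses supply a $G$-equivariant isomorphism $(\psi^*L_Y)|_{X_\kappa}=\phi^*(L_Y|_{Y_\kappa})\cong L_X|_{X_\kappa}$. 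Setting $N:=L_X\otimes(\psi^*L_Y)^\vee$, the task becomes to show that $N$, a $G$-equivariant line bundle on $X$ that is $G$-equivariantly trivial on $X_\kappa$, is $G$-equivariantly trivial on $X$; an equivariant trivialization of $N$ is exactly the desired equivariant isomorphism $L_X\cong\psi^*L_Y$.

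Next I would prove that $N$ is trivial, forgetting the $G$-action for the moment. Writing $X_n:=X\times_A A/\mathfrak{m}^{n+1}$, flatness of $X/A$ identifies the ideal of the square-zero thickening $X_{n-1}\hookrightarrow X_n$ with $(\mathfrak{m}^n/\mathfrak{m}^{n+1})\otimes_\kappa\cO_{X_\kappa}$, and the truncated exponential sequence gives an exact sequence $H^1(X_\kappa,(\mathfrak{m}^n/\mathfrak{m}^{n+1})\otimes_\kappa\cO_{X_\kappa})\to\Pic(X_n)\to\Pic(X_{n-1})$. The left-hand term is a multiple of $H^1(X_\kappa,\cO_{X_\kappa})$, which vanishes by \eqref{eqn.moreH1vanishing}; hence the restriction maps $\Pic(X_n)\to\Pic(X_\kappa)$ are injective and $N|_{X_n}$ is trivial for every $n$. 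The same vanishing kills the obstructions to lifting trivializations, so these may be chosen compatibly and assemble to a trivialization of $N$ on the formal completion of $X$ along $X_\kappa$. Grothendieck's existence theorem over the completion $\hat A$ then algebraizes this to an isomorphism $N_{\hat A}\cong\cO_{X_{\hat A}}$, and faithfully flat descent along $A\to\hat A$ --- using $\pi_*\cO_X=A$ and flat base change for the proper morphism $\pi$ --- descends it to $N\cong\cO_X$.

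Finally I would upgrade triviality to $G$-equivariant triviality. Transporting the $G$-linearization of $N$ across a chosen isomorphism $N\cong\cO_X$ produces a $1$-cocycle in $Z^1(G,H^0(X,\cO_X^\times))=Z^1(G,A^\times)$ measuring the failure of the given linearization to agree with the standard one, and two linearizations coincide precisely when the associated class in $H^1(G,A^\times)$ vanishes. Since $N$ is $G$-equivariantly trivial on $X_\kappa$, this class dies in $H^1(G,\kappa^\times)$, so it lies in the image of $H^1(G,1+\mathfrak{m})$. Here the Henselian hypothesis is decisive: with $|G|$ invertible in $\kappa$, the group $1+\mathfrak{m}$ is uniquely $|G|$-divisible (extract $|G|$-th roots by Hensel's lemma), whence $H^1(G,1+\mathfrak{m})=0$ and the class vanishes. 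This yields the $G$-equivariant isomorphism $L_X\cong\psi^*L_Y$.

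I expect the main obstacle to be the middle step: converting infinitesimal triviality into honest triviality over the non-complete Henselian base, where projectivity (through Grothendieck existence over $\hat A$) and faithfully flat descent are essential. An alternative packaging, which would simultaneously reuse both hypotheses without separately invoking the first lemma, is to work with the Isom-scheme of pairs $(X,L_X)$ and $(Y,L_Y)$ and to show it is smooth at the $\kappa$-point $(\phi,\lambda_0)$: deformations and obstructions of an isomorphism of such pairs are governed by the Atiyah extension $0\to\cO_{X_\kappa}\to\mathrm{At}(L_{X_\kappa})\to T_{X_\kappa}\to 0$, whose $H^1$ vanishes by \eqref{eqn.H1vanishing} and \eqref{eqn.moreH1vanishing} together, after which the Henselian property lifts the point to an $A$-point and the $|G|$-invertible refinement (smoothness of the equivariant locus) lifts it $G$-equivariantly.
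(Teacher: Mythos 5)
Your strategy is sound and its first half runs parallel to the paper's: both arguments reduce to producing a nowhere-vanishing $G$-invariant global section of the ``difference'' line bundle (your $N$ is the dual of the paper's $M:=L_X^{\vee}\otimes\psi^*L_Y$), and both extract what is needed from the vanishing \eqref{eqn.moreH1vanishing} applied to the successive thickenings $X_n$. The paper does this more economically: by the Theorem on Formal Functions, $H^0(X,M)\otimes_A\widehat{A}\cong\varprojlim_n H^0(X,M|_{X_n})$, and the exact sequences $0\to\cO_{X_\kappa}^d\to M|_{X_{n+1}}\to M|_{X_n}\to 0$ show each transition map on $H^0$ is surjective; since one only lifts \emph{sections} rather than isomorphism classes of line bundles, no Picard-group injectivity, Grothendieck existence, or descent from $\widehat{A}$ is needed (and a lift that is nowhere vanishing on $X_\kappa$ is automatically nowhere vanishing on $X$ by properness). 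Where you genuinely diverge is the equivariance: the paper simply averages a lift over $G$ --- legitimate because $|G|\in\kappa^\times$ forces $|G|\in A^\times$, and the average still restricts to the given invariant section --- whereas you compare two linearizations of a trivialized bundle via a class in $H^1(G,H^0(X,\cO_X)^\times)$ and kill it using unique $|G|$-divisibility of $1+\mathfrak{m}$. That argument works but buys nothing over averaging and silently assumes $H^0(X,\cO_X)=A$ and $H^0(X_\kappa,\cO_{X_\kappa})=\kappa$ (geometrically connected fibers), which is not among the stated hypotheses, though it holds in the paper's application and is repairable since $H^0(X,\cO_X)$ is a product of Henselian local rings finite over $A$. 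Two further small points: the Theorem on Formal Functions and Grothendieck existence require $A$ Noetherian, which the paper arranges at the outset via \cite[Corollary 18.6.15]{EGA4} and which you should also do before forming $\widehat{A}$; and your closing suggestion via the Atiyah extension is a nice way to see that \eqref{eqn.H1vanishing} and \eqref{eqn.moreH1vanishing} are exactly the deformation-theoretic input for the pair $(X,L_X)$, but as sketched it still needs the equivariant lifting that the averaging trick supplies for free.
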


\begin{proof}
By \cite[Corollary 18.6.15]{EGA4}, we may assume that $A$ is Noetherian.
To have a $G$-equivariant isomorphism $L_X\cong \psi^*L_Y$ is equivalent to
having a nowhere vanishing $G$-invariant global section of
$M:=L_X^{\vee} \otimes \psi^*L_Y$.
Since a global section may be averaged with its translates under the
group action, it suffices to show that
\[ H^0(X,M)\to H^0(X_{\kappa},M|_{X_{\kappa}}) \]
is surjective.
Letting $\mathfrak{m}$ denote the maximal ideal of $A$, and
$\widehat{A}$ the completion, we consider the commutative diagram
\[
\xymatrix{
H^0(X,M) \ar[r] \ar[d] & H^0(X_\kappa, M|_{X_\kappa}) \\
H^0(X,M) \otimes_A \widehat{A}  \ar[r]^(0.47)\sim & \underset{n}\varprojlim \,H^0(X, M|_{X_n}) \ar[u]
}
\]
where the bottom isomorphism comes from the Theorem on Formal Functions, and
$X_n$ is obtained from $X$ by base change via $\Spec(A/\mathfrak{m}^n)\to \Spec(A)$.
Now it is enough to show $\underset{n}\varprojlim\, H^0(X, M|_{X_n}) \to H^0(X_\kappa, M|_{X_\kappa})$ is surjective.
We claim that \mbox{$H^0(X, M|_{X_{n+1}}) \to H^0(X, M|_{X_n})$}
is surjective for all $n$.
Fix $n$, and let $d$ denote the dimension of $\mathfrak{m}^n/\mathfrak{m}^{n+1}$
as $\kappa$-vector space.
Then we have a short exact sequence of $\cO_X$-modules
\begin{equation*}
0 \to \cO_{X_\kappa}^d \to M|_{X_{n+1}} \to M|_{X_n} \to 0,
\end{equation*}
giving a long exact sequence on cohomology:
\begin{equation*}
0 \to H^0(X_\kappa,\cO_{X_\kappa})^d \to H^0(X, M|_{X_{n+1}}) \to H^0(X, M|_{X_n}) \to H^1(X_\kappa, \cO_{X_\kappa})^d \to \dots
\end{equation*}
From the cohomology vanishing \eqref{eqn.moreH1vanishing} the claim follows.
\end{proof}

\section{Reduction}
\label{sec.reduction}
For a smooth variety $S$, given a Brauer group element $\alpha \in Br(k(S))$, we may consider its \emph{ramification divisor} on $S$, complement of the largest open subscheme of $S$ with a Brauer group element that restricts to $\alpha$. We also use the notion of \emph{root stack} \cite[\S 2]{cadman}, \cite[App. B]{AGV}, specifically the \emph{iterated root stack} along a set of effective Cartier divisors \cite[Def.\ 2.2.4]{cadman}.
\begin{lemma} \label{lem.reduction}
Let $k$ be a perfect field of characteristic different from $2$,
$S$ a smooth projective variety over $k$, $k(T)$ a degree $2$ \'etale $k(S)$-algebra and $T \to S$ the normalization of $S$ in $k(T)$. Let $\alpha \in Br(k(T))$ be a class of a quaternion algebra which corestricts to $0$ in $Br(k(S))$. As in \Cref{thm.main}, we assume that resolution of marked ideals on smooth varieties of dimension equal to $\dim(S)$ is available over $k$. Then there exists a birational morphism $S'\to S$, where $S'$ is smooth projective, such that for $T' \to S'$ the normalization of $S'$ in $k(T)$ we have:
\begin{itemize}
\item the branch divisor $D'=D_1 \cup \dots \cup D_r$ of $T' \to S'$ on $S'$ is smooth.
\item There is a simple normal crossing divisor $D_1 \cup \dots \cup D_r \cup E_1 \cup \dots \cup E_s$ on $S'$, such that for $F_i$'s the pre-images of $E_i$'s on $T'$, $F_1 \cup \dots \cup F_s$ is the ramification divisor of $\alpha$ on $T'$.
\item $\alpha$ is the class of a quaternion algebra over $k(T')$ which is
the restriction of a sheaf of Azumaya algebras of degree $2$ on the iterated root stack $\sqrt{(T',\{F_1,\dots,F_s\})}$.
\item All intersections of triples of divisors $E_1, \dots , E_s$ are empty.
\item The projective representation of
$\mu_2 \times \mu_2$ determined by the sheaf of Azumaya algebras at any component of $F_i \cap F_j$,
$i \neq j$, is faithful.
\end{itemize}
\end{lemma}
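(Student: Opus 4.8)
The plan is to reduce to a quaternion class on the base, clean up the ramification geometry by resolution and a sequence of blow-ups, and then produce the Azumaya algebra by purity on the root stack. First I would descend $\alpha$ using \Cref{lem.quaterniondescend}: since $\alpha$ is a quaternion class with $\cores_{k(T)/k(S)}(\alpha)=0$, there is a quaternion class $\beta\in\Br(k(S))$ with $\alpha=\beta|_{k(T)}$. This is the conceptual key, as it forces the ramification to be pulled back from below: away from the branch divisor of $T\to S$ the normalization is \'etale, so the residues of $\alpha$ along divisors of $T$ are the pullbacks of the residues of $\beta$ along divisors of $S$, and, up to components lying over the branch locus, the ramification divisor of $\alpha$ is the preimage of that of $\beta$. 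This is exactly the source of the divisors $E_i$ downstairs and their preimages $F_i$ upstairs. Since $\beta$ is well defined only modulo symbols $(c,d)$ with $k(T)=k(S)(\sqrt{d})$, I would exploit this freedom to arrange that $\beta$ does not ramify along branch components, so that the $E_i$ stay disjoint from the eventual $D_j$.

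Next I would apply the available resolution of marked ideals to the union of the branch divisor $B$ of $T\to S$ and the ramification divisor $R$ of $\beta$, obtaining $S'\to S$ along which the total transform is simple normal crossing; this produces the SNC divisor $D_1\cup\dots\cup D_r\cup E_1\cup\dots\cup E_s$. To force the branch divisor to be \emph{smooth}, I would blow up the codimension-two strata where two branch components meet: the exceptional divisor acquires even multiplicity $1+1=2$ in the square class defining $T\to S$, hence drops out of the branch locus and separates the two components, making $T'$ smooth over the now-smooth branch divisor. The $E_i$ are then the ramification components of $\beta$ not lying in the branch locus, and $F_i$ their preimages in $T'$.

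Then I would perform two further families of blow-ups. To empty all triple intersections of the $E_i$ and to secure faithfulness, I would blow up the offending loci: at a component of $F_i\cap F_j$ where the local quaternion symbol is correlated, say of the form $(t_it_j,u)$, blowing up the crossing yields an exceptional divisor along which $\alpha$ has even valuation $2$ in the ramified argument, hence is unramified, while the strict transforms of $F_i$ and $F_j$ get separated; a genuinely transverse crossing has local model $(t_i,t_j)$, whose residues use both square roots independently and realize the Klein four-subgroup of $\mathrm{PGL}_2$, i.e.\ a faithful projective representation of $\mu_2\times\mu_2$. The same device at triple loci removes triple intersections. Functoriality of resolution of marked ideals in dimension $\dim(S)$ over the perfect field $k$ is what legitimizes all of these operations over $k$ rather than only geometrically.

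Finally, on the smooth $T'$ with SNC ramification divisor $\bigcup_i F_i$, I would pass to the iterated root stack $\sqrt{(T',\{F_1,\dots,F_s\})}$. Pulling back $\alpha$, the local symbols become unramified in codimension one: writing $t_i=s_i^2$ for the tautological sections, one has $(u,t_i)=2(u,s_i)=0$ and $(t_i,t_j)=0$ in the $2$-torsion. By purity for the Brauer group on this smooth Deligne--Mumford stack, the unramified class is represented by an Azumaya algebra, of degree $2$ since $\alpha$ has index $2$, and restricting to $k(T')$ recovers $\alpha$. The main obstacle I expect is the bookkeeping that makes all five conditions hold \emph{simultaneously}: each blow-up improves one configuration but can disturb another, so the argument must be organized, presumably by induction on a suitable invariant of the divisor configuration or by a careful ordering of the centers, so that the SNC structure, the smoothness of the branch divisor, the pulled-back form of the ramification, the emptiness of triple $E$-intersections, and faithfulness at double crossings all survive to the end. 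The descent step is conceptually decisive, but this resolution bookkeeping, together with the purity-based Azumaya extension on the stack, is the technical heart.
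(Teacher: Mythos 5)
Your overall skeleton (descend $\alpha$ to a quaternion class on $k(S)$ via \Cref{lem.quaterniondescend}, resolve to a simple normal crossing configuration, blow up double loci of the branch divisor to smooth it, use blow-ups and residue computations to kill triple intersections and force faithfulness at double crossings) matches the paper's first and third reduction steps. But there is a genuine gap at the step you dispatch with ``purity'': you claim that because the pulled-back class is unramified on the iterated root stack, it ``is represented by an Azumaya algebra, of degree $2$ since $\alpha$ has index $2$.'' Purity only tells you the class extends as a \emph{cohomological} Brauer class on the root stack; passing from that to a sheaf of Azumaya algebras is the $\Br=\Br'$ problem, which is delicate on Deligne--Mumford stacks, and even granting some Azumaya representative, nothing forces it to have degree $2$. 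A class of period $2$ and generic index $2$ need not be representable by a rank-$4$ Azumaya algebra globally: that requires the rank-$2$ twisted vector bundle (on the $\G_m$-gerbe over the root stack) that exists over a dense open $U$ to extend as a \emph{locally free} sheaf, which fails in general over bases of dimension $>2$. This is precisely why the lemma asserts the existence of a further birational modification $S'\to S$ beyond what resolution of the divisor configuration provides, and why the paper's second reduction step is the technical heart: one extends the twisted rank-$2$ bundle as a coherent sheaf (\cite[Cor.\ 15.5]{LMB}), takes its Fitting ideal, blows up, applies Raynaud--Gruson flatification (made sense of via the local $\mu_2$-quotient presentations of the root stack), and then resolves the resulting marked ideal to realize the flatifying modification by smooth blow-ups compatible with the divisor marking. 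Your proposal gives no mechanism that would produce these extra centers of blow-up, so the third bullet of the lemma is not established.

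Two smaller remarks. First, the lemma does not ask the $E_i$ to be disjoint from the $D_j$, only that everything together be simple normal crossing; your plan to adjust $\beta$ by a symbol $(c,d)$ to remove ramification along branch components is unnecessary (the residue of $\alpha_0$ along a branch component dies upon pullback to $T$ because the ramification index is $2$ and the class is $2$-torsion), and it is not clear such a global adjustment is always available. Second, the faithfulness condition is a statement about the projective representation of $\mu_2\times\mu_2$ attached to the sheaf of Azumaya algebras at a double crossing, so it can only be verified \emph{after} the Azumaya algebra on the root stack has been constructed; the paper handles this, together with emptying the triple intersections, by invoking Steps 2--4 of the proof of \cite[Thm.\ 3.1]{oesinghaus}, whereas your symbol-level heuristic presupposes the object whose existence is the main difficulty.
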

Notice that smoothness of the branch divisor on $S'$ implies that $T'$ is smooth.

\begin{proof}
By \Cref{lem.quaterniondescend}, $\alpha$ is the restriction of the class $\alpha_0$ of a quaternion algebra in $\Br(k(S))$. Once a choice of such $\alpha_0$ has been made, we may consider its ramification divisor $E$ on $S$. We also have the branch divisor $D$ of $T \to S$. The proof of \Cref{lem.reduction} is a series of reduction steps on $S$, $D$ and $E$.

As a first reduction step, application of the resolution hypothesis tells us,
\emph{we may suppose that the components of the branch divisor and the
	ramification divisor (for a choice of $\alpha_0$) together form a
	simple normal crossing divisor on $S$}.
We denote by $D_1$, $\dots$, $D_r$ the components of the branch divisor.
The ramification divisor may have components in common with the
branch divisor; we denote its additional components by
$E_1$, $\dots$, $E_s$.
Arguing as in the preparation to the
proof of \cite[Thm.\ 4]{oesinghaus}, we pass to the iterated root stack 
along all the divisor components
\[
\mathcal{R}:=\sqrt{(S,\{D_1,\dots,D_r,E_1,\dots,E_s\})}
\]
and obtain a global (cohomological) Brauer class restricting to $\alpha_0$:
\[ \alpha_0\in
\image\big(\Br(\mathcal{R})\to \Br(k(S))\big). \]

Here is the second reduction step:
\emph{we may suppose that there is a simple normal crossing divisor on $S$
	containing the branch divisor, such that the corresponding iterated root stack
	admits a sheaf of Azumaya algebras whose restriction to the generic point is a
	quaternion algebra with Brauer class $\alpha_0$}.
This uses Raynaud-Gruson flatification \cite{raynaudgruson}, as in
the proof of \cite[Thm.\ 1.2]{kreschtschinkelbssurf}.

The quaternion algebra over $k(S)$ spreads out to a sheaf
of Azumaya algebras on some nonempty open $U\subset S$,
disjoint from the ramification divisor and thus identified with
an open substack of $\mathcal{R}$.
Better, we associate to the (unique by \cite[Prop.\ 2.5(iv)]{antieaumeier})
Brauer class on $\mathcal{R}$ restricting to $\alpha_0$ a gerbe
\[ \mathfrak{G}\to \mathcal{R} \]
banded by $\G_m$, and the sheaf of Azumaya algebras determines by the
machinery of nonabelian cohomology, canonically, a rank $2$ vector bundle on
$\mathfrak{G}\times_{\mathcal{R}}U$ whose projectivization descends to yield
the associated Brauer-Severi scheme over $U$ (see the proof of
\cite[Thm.\ IV.2.5]{milne}).
By \cite[Cor.\ 15.5]{LMB},
this vector bundle is the restriction of a coherent sheaf
$\mathcal{E}$ on $\mathfrak{G}$.
To $\mathcal{E}$ there is a Fitting ideal sheaf, which descends to a
coherent sheaf of ideals on $\mathcal{R}$, corresponding to a closed substack
disjoint from $U$, such that upon blowing up to obtain
$\mathcal{R}'\to \mathcal{R}$
there is a vector bundle on $\mathfrak{G}\times_{\mathcal{R}}\mathcal{R}'$
which restricts to the given vector bundle
on $\mathfrak{G}\times_{\mathcal{R}}U$.

We wish to apply Raynaud-Gruson flatification to $\mathcal{R}'\to S$.
While Raynaud-Gruson flatification is defined for schemes, not stacks,
$\mathcal{R}$ is Zariski locally over $S$
a stack quotient by some power of $\mu_2$,
hence $\mathcal{R}'$ enjoys similar local quotient presentations,
and the machinery of Raynaud-Gruson flatification is local in nature
(see, specifically, \cite[(5.3.3)]{raynaudgruson}), so that we may obtain
a coherent sheaf $\mathcal{I}$ of ideals on $S$, corresponding to a
closed subscheme disjoint from $U$, whose blow-up
$\Proj(\bigoplus \mathcal{I}^j)$
has the flatification property: the stack-theoretic closure of $U$ in
$\mathcal{R}'\times_S\Proj(\bigoplus \mathcal{I}^j)$ is
flat over $\Proj(\bigoplus \mathcal{I}^j)$.
The same holds with $\Proj(\bigoplus \mathcal{I}^j)$ replaced by any
projective variety $\widetilde{S}$ with birational morphism to $S$, restricting to an
isomorphism over $U$ and factoring through $\Proj(\bigoplus \mathcal{I}^j)$.
We may apply resolution to the ideal
$\mathcal{I}$ with marking
$D_1$, $\dots$, $D_r$, $E_1$, $\dots$, $E_s$,
to obtain such $\widetilde{S}$ via a sequence of blow-ups
\[ \widetilde{S}\to \dots \to S, \]
each along a smooth center that has normal crossing with the marking
(to which, after each blow-up, the exceptional divisor gets added).

We consider the branch divisor on $\widetilde{S}$
(of its normalization in $k(T)$), whose components
are the proper transforms of
$D_1$, $\dots$, $D_r$ together with
some subset of the components of the exceptional divisors.
There is also the ramification divisor of
$\alpha_0$ on $\widetilde{S}$.
The components of these, together, form a simple normal crossing divisor
on $\widetilde{S}$.
The normalization morphism of $\mathcal{R}\times_S\widetilde{S}$ factors through
the stack-theoretic closure of $U$ in
$\mathcal{R}'\times_S\widetilde{S}$, because the morphism from the stack-theoretic closure
to $\mathcal{R}\times_S\widetilde{S}$ is representable and finite birational.
The iterated root stack of $\widetilde{S}$ along the components of its
branch and ramification divisors factors through $\mathcal{R}\times_S\widetilde{S}$
hence as well through the normalization,
and we have achieved the second reduction step.

We give a third reduction step: \emph{we may suppose that the
	branch divisor on $S$ is smooth, the
	ramification divisor on $T$ of the Brauer class $\alpha$ is the
	pre-image of a divisor on $S$ which has no common components with
	the branch divisor, the two together forming a simple normal crossing divisor
	on $S$, and $\alpha$ is the class of a quaternion algebra over $k(T)$ which is
	the restriction of a sheaf of Azumaya algebras on the iterated root stack of $T$
	along the components of the ramification divisor}.
Notice that smoothness of the branch divisor on $S$ implies that
$T$ is smooth.

Starting from the outcome of the second reduction step,
with branch divisor $D_1\cup\dots\cup D_r$ and simple normal crossing divisor
$D_1\cup\dots\cup D_r\cup E_1\cup\dots\cup E_s$ on $S$,
it is easy to achieve
smoothness of the branch divisor by repeatedly blowing up a
nonempty locus of the form $D_i\cap D_j$ with $i\ne j$.
By applying Steps 2 and 3 of the proof of \cite[Thm.\ 3.1]{oesinghaus} to
components of intersections of pairs, respectively triples of the
divisors $E_1$, $\dots$, $E_s$, we may suppose that the projective
representation of $\mu_2\times \mu_2$
determined by the sheaf of Azumaya algebras at any component of
$E_i\cap E_j$, $i\ne j$, is faithful, and all intersections of
triples of divisors $E_1$, $\dots$, $E_s$ are empty.
Let $F_i$ denote the pre-image of $E_i$ under $T\to S$.
So, we have a finite \'etale morphism
\[
\sqrt{(T,\{F_1,\dots,F_s\})}\to \sqrt{(S,\{D_1,\dots,D_r,E_1,\dots,E_s\})},
\]
by which the sheaf of Azumaya algebras pulls back to yield, on
$\sqrt{(T,\{F_1,\dots,F_s\})}$, a sheaf of Azumaya algebras
whose restriction to $k(T)$ has Brauer class $\alpha$.
Application of Step 4 of loc.\ cit.\ completes the third reduction step.

Composing the three reduction steps into a single birational morphism $S' \to S$ we achieve the desired properties.
\end{proof}
\section{Construction} \label{sec.construction}
With the starting data of \Cref{thm.main}, we now apply \Cref{lem.reduction} in the case when $k(T)$ is the \emph{discriminant extension} parametrizing rulings of the quadric surface $X_{k(S)}$ and $\alpha \in Br(k(T))$ is the corresponding class of a quaternion algebra that corestricts to zero \cite[Example 3.3]{AB}. 

The sheaf of Azumaya algebras on $\sqrt{(T',\{F_1,\dots,F_s\})}$ of \Cref{lem.reduction} determines a smooth $\PP ^1$-fibration $\Omega\to \sqrt{(T',\{F_1,\dots,F_s\})}$. Together with the finite \'etale morphism $\sqrt{(T',\{F_1,\dots,F_s\})}\to \sqrt{(S',\{D_1,\dots,D_r,E_1,\dots,E_s\})}$, we get, by restriction of scalars, a smooth $(\PP ^1 \times \PP ^1)$-fibration $\Phi\to \sqrt{(S',\{D_1,\dots,D_r,E_1,\dots,E_s\})}$. We see that $\Phi_{k(S)}$ is isomorphic to $X_{k(S)}$, hence we have a birational map $\Phi \dashrightarrow X$ restricting to an isomorphism over the generic point of $S$.  Over the generic point of $S'$, hence as well over a dense open subscheme of $S'$, we have a smooth fibration in quadric surfaces with relatively very ample line bundle of degree 2. This line bundle is the restriction of a line bundle $L$ on the whole smooth $(\PP ^1 \times \PP ^1)$-fibration by \Cref{lem.invertiblesheafextend} below.

\begin{lemma}
\label{lem.invertiblesheafextend}
Let $X$ be a regular Noetherian algebraic stack, and let
$U\subset X$ be an open substack.
Then the restriction map $\Pic(X)\to \Pic(U)$ is surjective.
\end{lemma}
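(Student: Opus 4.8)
The plan is to reduce the statement about line bundles to the corresponding statement about Weil divisors, exploiting regularity. On a regular Noetherian scheme, the Picard group is computed by Cartier divisors, which coincide with Weil divisors; the same should hold for a regular Noetherian algebraic stack, and this is the engine of the proof. So the first step is to establish, in the stacky setting, that $\Pic(X)$ is isomorphic to the group of divisor classes $\mathrm{Cl}(X)$, and likewise for $U$, with the restriction map $\Pic(X)\to \Pic(U)$ compatible with the restriction of divisor classes $\mathrm{Cl}(X)\to \mathrm{Cl}(U)$.

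Granting this identification, the key step is surjectivity of the restriction map on divisor classes, which is elementary. Given a line bundle on $U$, represent it by a Weil divisor $D$ on $U$, that is, a finite $\Z$-linear combination of prime (codimension one, integral closed) substacks of $U$. Each such prime substack has a closure in $X$, again a codimension-one integral closed substack, and I would take $\overline{D}$ to be the same $\Z$-linear combination of these closures. Then $\overline{D}$ is a Weil divisor on $X$ restricting to $D$ on $U$, so the class of $\overline{D}$ in $\mathrm{Cl}(X)$ maps to the class of $D$ in $\mathrm{Cl}(U)$. Translating back through the isomorphism with $\Pic$, the resulting line bundle on $X$ restricts to the given one on $U$, which gives surjectivity.

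To implement the first step I would pass to a smooth presentation. Choose a smooth surjective morphism $V\to X$ from a scheme $V$; since $X$ is regular and the morphism is smooth, $V$ is a regular Noetherian scheme, and the preimage $V_U\subset V$ of $U$ is an open subscheme whose complement has codimension equal to that of $X\smallsetminus U$ in $X$. On the scheme level the isomorphism $\Pic\cong \mathrm{Cl}$ and the surjectivity of restriction are standard (for instance via \cite[II.6.5, II.6.16]{hartshorne} and the exact sequence relating the divisor class group of a regular scheme to that of an open subscheme). The content is then to descend these facts along the smooth cover, matching prime divisors on $X$ with Galois-stable (more precisely, groupoid-invariant) prime divisors on $V$, and checking that principal divisors correspond. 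I expect the main obstacle to be precisely this descent bookkeeping: verifying that the notions of codimension-one substack, rational function, and principal divisor behave well under a smooth groupoid presentation, so that the clean scheme-theoretic argument transfers without loss. Once the dictionary between stacky divisors and their pullbacks to $V$ is in place, the surjectivity statement follows formally.
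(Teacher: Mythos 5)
There is a genuine gap, and it occurs at the very first step rather than in the ``descent bookkeeping'' you flag as the main obstacle. For a regular Noetherian \emph{algebraic stack} the Picard group is in general strictly larger than the group of Weil divisor classes, because a line bundle on an integral stack with nontrivial generic stabilizer need not admit a rational section: its restriction to the residual gerbe at the generic point can be a nontrivial character of the stabilizer. Concretely, take $X=\A^1_k\times B\mu_2$ and $U=\G_{m,k}\times B\mu_2$; the line bundle on $U$ pulled back from the nontrivial character of $\mu_2$ is not of the form $\cO(D)$ for any $\Z$-linear combination $D$ of codimension-one integral closed substacks, since every such $\cO(D)$ is pulled back from $\G_{m,k}$ and hence trivial on the residual gerbes. (The conclusion of the lemma still holds in this example --- extend by pullback from $B\mu_2$ --- but your proof cannot see this bundle at all.) The stacks to which the lemma is applied in Section \ref{sec.construction} are root stacks and fibrations over them, which carry exactly such ``stacky'' line bundles along the gerbes over the divisors, so this is not a removable edge case. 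Your scheme-level engine (prime divisors, closures, $\Pic\cong\mathrm{Cl}$ on regular schemes) is fine, but the dictionary you hope to set up between line bundles on $X$ and groupoid-invariant divisors on a smooth presentation $V$ loses precisely the classes with nontrivial stabilizer action, which is where the content of the lemma lies in this paper.

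The paper's proof avoids divisors entirely: by \cite[Cor.\ 15.5]{LMB} the given invertible sheaf on $U$ extends to a coherent sheaf $\mathcal{F}$ on $X$; replacing $\mathcal{F}$ by its double dual does not change $\mathcal{F}|_U$ and makes $\mathcal{F}$ reflexive \cite[Cor.\ 1.2]{hartshorne}; and a rank-one reflexive sheaf on a regular (hence locally factorial) space is invertible \cite[Prop.\ 1.9]{hartshorne}, a statement that can be checked smooth-locally on a presentation and therefore applies to stacks. If you want to keep a divisor-theoretic flavor you would have to work with reflexive rank-one sheaves (equivalently, with a class group enlarged to account for characters of the stabilizers) rather than with naive Weil divisors; at that point you have essentially reconstructed the paper's argument.
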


\begin{proof}
There is no loss of generality in supposing $U$ to be dense in $X$.
By \cite[Cor.\ 15.5]{LMB},
any coherent sheaf on $U$ is isomorphic to the restriction
of a coherent sheaf on $X$.
Let an invertible sheaf on $U$ be given,
isomorphic to the restriction of coherent sheaf $\mathcal{F}$ on $X$.
We may replace $\mathcal{F}$ by its double dual without changing the
isomorphism type of the restriction to $U$, so by
\cite[Cor.\ 1.2]{hartshorne} there is no loss of generality in
supposing $\mathcal{F}$ to be reflexive.
Then $\mathcal{F}$ is invertible by \cite[Prop.\ 1.9]{hartshorne}.
\end{proof}

Recall that $D_1 \cup \dots \cup D_r \cup E_1 \cup \dots \cup E_s$ is a simple normal crossing divisor on $S'$, where $D_1 \cup \dots \cup D_r$ is the branch divisor of the normalization $T' \to S'$ on $S'$. We now call $E_1 \cup \dots \cup E_s$ the \emph{ramification divisor} on $S'$ (note that this is not necessarily the ramification divisor of a class of a quaternion algebra in $\Br (k(S'))$ that restricts to $\alpha$ as we saw in Lemma \ref{lem.reduction}). In Lemma \ref{lem.reduction}, we achieved that the branch divisor is smooth and all intersections of triples of components of the ramification divisor are empty. Now, by \cite[Lemma 2.8]{kreschtschinkelbssurf}, the \'etale local isomorphism type of $\Phi$ is uniquely determined
by the action of the stabilizer group, which is $\mu_2$ generically along
the branch and ramification divisors,
$\mu_2\times \mu_2$ generically along an intersection
(of the branch and ramification divisors, or of two components
of the ramification divisor), and is
$\mu_2\times \mu_2\times \mu_2$ at points of the branch divisor
that lie on two components of the ramification divisor.
Twisting $L$ by components of the gerbe over the branch divisor (i.e., the reduced pre-image
of $D_1 \cup \dots \cup D_r$) as appropriate, we may suppose that the space of
invariant sections of $L$ on fibers of $\Phi$ away from the ramification divisor
always has dimension $> 1$.
We may furthermore suppose that the space of
invariant sections of $L$ is nontrivial on fibers of $\Phi$ over the ramification divisor:
this potentially fails on components with $\mu_2 \times \mu_2 \times \mu_2$ stabilizer, which we
blow up, then after blowing up the intersection of the new exceptional divisor with
the proper transform of the gerbe over the branch divisor and twisting $L$ by both new
exceptional divisors there are nontrivial invariant sections of $L$ on all fibers of $\Phi$.

Let $S'^\circ$ be the complement of $S'$ by all the intersections of divisors above, which comprise a locus that, if nonempty, has codimension $2$. We use the same names $D_i$ and $E_j$ (and branch and ramification divisor, respectively) for their intersections with $S'^\circ$ and let $H$ be their union viewed as a divisor on $S'^\circ$. As $H$ is regular, the iterated root stack $\sqrt{(S'^\circ,\{D_1,\dots,D_r,E_1,\dots,E_s\})}$ is just $\sqrt{(S'^\circ, H)}$, for which we have a simple local description by \cite[Section 2.4]{kreschtschinkelbssurf}: $\sqrt{(S'^\circ, H)}$ is, Zariski-locally on $S'^\circ$, the quotient stack of $\Spec(\cO_{S'^\circ}[t]/(t^2-f))$ by the scalar action of $\mu_2$ on $t$, where $f$ is the regular function locally defining $H$. Let $\cG$ be the gerbe of the root stack $\sqrt{(S'^\circ, H)}$, $\Phi^\circ \to \sqrt{(S'^\circ,H)}$ the pullback of the $\PP^1\times \PP^1$-fibration $\Phi$, and let $\cF$ be the fiber of $\Phi^\circ$ over the gerbe. This is summarized in the diagram below, where each square is cartesian apart from the bottom middle one. Then the fibers over points $x \in H$ with residue field $\kappa(x)$ are copies of $B\mu_{2,\kappa(x)}$ in $\cG$ and copies of $[\PP^1\times \PP^1/ \mu_{2, \kappa(x)}]$ in $\cF$, where the action of $\mu_2$ depends on whether $x$ is a point of the branch divisor or the ramification divisor.

\[
\xymatrix{
[\PP^1\times\PP^1/\mu_{2, \kappa(x)}] \ar[r] \ar[d] & \cF \ar[r] \ar[d] & \Phi^\circ \ar[r] \ar[d] & \Phi \ar[d] \\
B\mu_{2,\kappa(x)} \ar[r] \ar[d] & \cG \ar[r] \ar[d] & \sqrt{(S'^\circ, H)} \ar[r] \ar[d] & \sqrt{(S',\{D_1,\dots,D_r,E_1,\dots,E_s\})} \ar[d] \\
\Spec \kappa(x) \ar[r] & H \ar[r] & S'^\circ \ar[r] & S'
}
\]

As in the proof of \cite[Prop.\ 3.1]{kreschtschinkelbssurf}, we will perform a construction by blow-up, contraction and descent, yielding quadric surface bundle $B^\circ$ over $S'^\circ$. We then canonically extend this over the codimension $2$ locus to the whole of $S'$ \cite[\S 9]{kreschtschinkelbssurf}.

Over the points of the branch divisor, $\mu_2$ acts by swapping the factors of $\PP^1\times\PP^1$. We blow up the fixed points under this action, the diagonals $\Delta$ of $\PP^1\times \PP^1$.

Over the points of the ramification divisor, $\mu_2$ acts by $((s_0:s_1),(t_0:t_1)) \mapsto ((s_0:-s_1),(t_0:-t_1))$ for the coordinates on $\PP^1\times\PP^1$. We have four fixed points. The restriction of $L$ to a fiber is $\cO(1,1)$, with $\mu_2$ action given by either multiplication by $-1$ on $s_0\otimes t_1$ and $s_1\otimes t_0$, multiplication by $1$ on $s_0\otimes t_0$ and $s_1 \otimes t_1$, or the opposite, with $1$ and $-1$ swapped. Global sections on the quotient stack are the invariant sections, which form a pencil of conics on $\PP^1\times\PP^1$. The base locus is $\{((0:1),(1:0)), ((1:0),(0:1))\}$ for the first action and $\{((0:1),(0:1)), ((1:0),(1:0))\}$ for the second (together they are the four fixed points of the action on $\PP^1\times\PP^1$). Now we blow up the movable fixed points. 

Let $\widetilde{\Phi}$ be the combined blowup, $E$ the exceptional divisor, and $F$ the proper transform of $\cF$. On $F\subset \widetilde{\Phi}$ the action of $\mu_2$ is nontrivial. So we need to contract $F$; for this we follow \cite[Proposition A.9]{kreschtschinkelbssurf}. 
Let $L_0$ be some appropriate power $a$ of the pullback of $L$ to $\widetilde{\Phi}$ twisted by $F$, so that $L_0$ is ample. We can verify locally over suitable \'etale charts of the root stack that this line bundle gives us the desired contraction. 

If $x$ lies on the branch divisor, we have the fiber $F_x\cong \PP^1\times\PP^1$. The normal bundle $N_{F_x/\widetilde{\Phi}_x}$ is
\begin{equation*}
\cO_{\widetilde{\Phi}_x}(F_x)|_{F_x}\cong \cO_{\widetilde{\Phi}_x}(-E_x)|_{F_x} \cong \cO_{F_x}(-\Delta)\cong \cO_{F_x}(-1,-1).
\end{equation*}
Now $L_0|_{F_x}$ is ample and has bidegree $(a-1,a-1)$ so by \cite[Example A.2 (ii)]{kreschtschinkelbssurf}, fiberwise $F_x$ is contracted to a point.

Over points of the ramification divisor, $F$ is a a degree 6 del Pezzo surface, which contracts to a line as in \cite[Step (i) in Section 2.5]{involution}.

Now we have semiample $L_0((a-1)F)=(L(F))^a$ on $\widetilde{\Phi}$, hence semiample $L(F)$, contracting $F$ to some locus in $B^\circ$, where $B^\circ$ has trivial action of $\mu_2$. The contraction yields, as well, a line bundle $M^\circ$ on $B^\circ$ that pulls back along the contraction to $L(F)$.

As $B^\circ$ has trivial action of $\mu_2$ on it, it descends by \cite[Proposition 2.5]{kreschtschinkelbssurf} to a quadric surface bundle $f^\circ:X'^\circ \to S'^\circ$, together with a line bundle $N^\circ$ on it that pulls back to $M^\circ$.

Finally, we can extend $X'^\circ$ across the codimension $2$ locus to the whole of $S'$ as in the final step of \cite[\S 9]{kreschtschinkelbssurf}. Letting $\iota\colon S'^\circ \to S'$ be the inclusion, we let $N=(\iota \circ f^\circ)_*N^\circ$ and consider the Zariski closure of $X'^\circ$ in $\PP(N^\vee)$, call it $X'$. We get a quadric surface bundle $X'\to S'$. This is the map $\pi'$ of \Cref{thm.main}.

\section{\'Etale local normal forms} \label{sec.local}
We now want to exhibit models for $\Phi \to \sqrt{(S'^\circ,\{D_1,\dots,D_r,E_1,\dots,E_s\})}$, \'etale local over $S'$, on which the construction of \Cref{sec.construction} can be carried out explicitly in coordinates. \'Etale local form of $\Phi$ at points $x\in S'$ depends on $x$.

In case of $x\in S'$, away from
the branch and ramification divisors, the \'etale local form is
given by the first entry in Table \ref{table.normalforms}.

Suppose $x$ lies on the branch divisor, but not on the ramification divisor.
Then we consider $\A^n=\Spec(k[t_1,\dots,t_n])$ with cover,
again $\A^n$, gotten by
adjoining $s_1$ with $s_1^2=t_1$.
Let $Q$ denote the quadric defined by
\[ A^2-B^2+C^2-D^2=0 \]
in $\PP^3=\Proj(k[A,B,C,D])$.
We let $\mu_2$ act on the coordinate $s_1$ of the cover $\A^n$, and
on the coordinate $A$ of $Q$.
The construction, applied to
\[
\xymatrix{
[(\A^n\times Q)/\mu_2] \ar[d] \\
[\A^n/\mu_2] \ar[d] \\
\A^n
}
\]
proceeds by blowing up the locus defined by $s_1=A=0$ and contracting the
proper transform of $Q$ over $s_1=0$ to obtain the map
\[ (s_1,t_2,\dots,t_n,A:B:C:D)\mapsto (t_1,\dots,t_n,A:s_1B:s_1C:s_1D). \]
This descends to give the quadric surface bundle over $\A^n$,
whose defining equation appears as the
second entry in Table \ref{table.normalforms}.

Other cases are handled similarly, with an appropriate number of
$\mu_2$ factors acting on the same number of coordintes $s_i$, where
$s_i^2=t_i$, and a suitable action of the product of $\mu_2$ factors on
the coordinates $A$, $B$, $C$, $D$.
We list the maps that arise out of the construction in each case:
\begin{align*}
(s_1,t_2,\dots,t_n,A:B:C:D)&\mapsto (t_1,\dots,t_n,A:B:s_1C:s_1D), \\
(s_1,s_2,t_3,\dots,t_n,A:B:C:D)&\mapsto
(t_1,\dots,t_n,A:s_1B:s_1s_2C:s_1s_2D), \\
(s_1,s_2,t_3,\dots,t_n,A:B:C:D)&\mapsto
(t_1,\dots,t_n,s_2A:s_1s_2B:s_1C:s_1D), \\
(s_1,s_2,t_3,\dots,t_n,A:B:C:D)&\mapsto
(t_1,\dots,t_n,A:s_2B:s_1s_2C:s_1D), \\
(s_1,s_2,s_3,t_4,\dots,t_n,A:B:C:D)&\mapsto
(t_1,\dots,t_n,A:s_1s_3B:s_1s_2s_3C:s_1s_2D), \\
(s_1,s_2,s_3,t_4,\dots,t_n,A:B:C:D)&\mapsto
(t_1,\dots,t_n,s_3A:s_1B:s_1s_2C:s_1s_2s_3D).
\end{align*}
The respective images correspond to
the remaining entries in Table \ref{table.normalforms}.

\appendix
\section{Non-flatness of involution surface bundles over
varieties of dimension $\ge 3$}
\label{app.nonflat}

Let $k$ be a perfect field of characteristic different from $2$.
Having (see reduction steps) achieved a finite degree $2$ covering
of smooth varieties with associated finite \'etale covering of
root stacks and smooth $\PP^1$-fibration over the source root stack determining
a smooth $\PP^1\times \PP^1$-fibration over the target root stack,
one might ask what kind of fibration arises by
applying the construction from \cite{involution}, that is:
\begin{itemize}
\item away from the ramification divisor we perform the construction of
blow-up, contraction, and descent,
\item away from the branch divisor apply we apply restriction of scalars to an
associated conic bundle,
\item we extend the ambient $\PP^8$-bundle, projectivization of a rank $9$
locally free coherent sheaf, to the whole base by applying the direct image
functor of the inclusion of the complement of the intersection of the
branch and ramification divisors to the locally free coherent sheaf,
\item we extend the involution surface bundle by means of Zariski closure.
\end{itemize}
Here we show that the third step succeeds, in the sense that the direct image
of the locally free coherent sheaf remains locally free;
it is enough to check this \'etale locally and therefore it suffices to
work with \'etale local models as in \cite[\S 3]{involution}.
But we will see that the fourth step leads, generally, to a scheme that
is not flat over the base.

As in loc.\ cit., we begin with \'etale local models over an
arbitrary field $k$ of characteristic different from $2$.
The first local model is taken directly from loc.\ cit.:
\begin{align*}
k[r,r^{-1},s,s^{-1},t]\langle &t^2u^2u'^2,tu^2u'v',t^2u^2v'^2,\\
&tuvu'^2,uvu'v',tuvv'^2, \\
&t^2v^2u'^2,tv^2u'v',t^2v^2v'^2\rangle.
\end{align*}
We write the same, under a change of coordinates, to reflect the situation of
two intersecting components of the ramification divisor:
\begin{align*}
k[r,r^{-1},s,t,t^{-1}]\langle &s^2(u^2+v^2)(u'^2+v'^2),
s(u^2+v^2)(u'^2-v'^2),s^2(u^2+v^2)u'v',\\
&s(u^2-v^2)(u'^2+v'^2),(u^2-v^2)(u'^2-v'^2),s(u^2-v^2)u'v', \\
&s^2uv(u'^2+v'^2),suv(u'^2-v'^2),s^2uvu'v'\rangle.
\end{align*}
With the coordinate $r$ defining the locus where the
degree $2$ covering is ramified we have, from from loc.\ cit.:
\begin{align*}
k[r,s,s^{-1},t,t^{-1}]\langle &r^2u^2u'^2,
r^2(uv'+vu')^2,(uv'-vu')^2,r^2v^2v'^2,\\
&ruu'(uv'-vu'),rvv'(uv'-vu'),r(uv'+vu')(uv'-vu'),\\
&r^2uu'(uv'+vu'),r^2vv'(uv'+vu')\rangle.
\end{align*}
We compute that the intersection of the above modules is the free module
\begin{align*}
k[r,s,t]\langle &r^2s^2uvu'v',
r^2t^2(u^2-v^2)(u'^2-v'^2),
s^2t^2(uv'-vu')^2,\\
&rst(uu'+vv')(uv'-vu'),
r^2st(uu'-vv')(uv'+vu'),\\
&rs^2t(uu'-vv')(uv'-vu'),
rst^2(uv'+vu')(uv'-vu'),\\
&r^2s^2t(uu'+vv')(uv'+vu'),
r^2st^2(uu'-vv')(uu'+vv')\rangle.
\end{align*}

Since we are working with fibrations in surfaces, for the
non-flatness assertion it suffices to show that the
Zariski closure has fiber over $0$ of dimension at least $3$.

For this we may suppose that $k$ is algebraically closed.
Let $\alpha$, $\beta$, $\gamma\in k$ be general, let us consider the curve in
$\A^3\times \PP^1\times \PP^1$ with affine coordinates $r$, $s$, $t$ on $\A^3$
and respective projective coordinates $u:v$ and $u':v'$ on the $\PP^1$ factors
defined by
\begin{align*}
r&=s, \\
t^2&=\frac{1}{16}(-s^4\alpha^2-s^2\beta^2+s^2\gamma^2
+s^4\alpha^2\beta^2\gamma^{-1}), \\
(u:v)&=(s^2\alpha\beta\gamma^{-1}+s^2\alpha+s\beta+s\gamma:4t), \\
(u':v')&=(s^2\alpha\beta\gamma^{-1}-s^2\alpha+s\beta-s\gamma:4t).
\end{align*}
In the image of the curve we find points with projective
coordinates $(x_0:\dots:x_8)$ satisfying
\[ x_3=\alpha x_0,\qquad x_4=\beta x_0,\qquad x_5=\gamma x_0, \]
and, generically, $x_0\ne 0$.
This observation establishes the non-flatness.


\begin{thebibliography}{99}

\bibitem{AGV}
D. Abramovich, T. Graber, and A. Vistoli,
Gromov-Witten theory of Deligne-Mumford stacks,
Amer. J. Math., 130(5):1337–1398, 2008.

\bibitem{antieaumeier}
B. Antieau and L. Meier,
The Brauer group of the moduli stack of elliptic curves,
Algebra Number Theory 14 (2020), 2295--2333.

\bibitem{artinbrauerseveri}
M. Artin,
Brauer-Severi varieties,
in Brauer groups in ring theory and algebraic geometry (Antwerp, 1981),
Lecture Notes in Math. 917, Springer-Verlag, Berlin, 1982, pp. 194--210.

\bibitem{AB}
A. Auel and M. Bernardara,
Semiorthogonal decompositions and birational geometry of
del Pezzo surfaces over arbitrary fields,
Proc. London Math. Soc. 117 (2018), 1--64.

\bibitem{ABP}
A. Auel, Chr. B\"ohning, and A. Pirutka,
Stable rationality of quadric and cubic surface bundle fourfolds,
Eur. J. Math. 4 (2018), 732--760.

\bibitem{APS}
A. Auel, R. Parimala, and V. Suresh,
Quadric surface bundles over surfaces,
Doc. Math. Extra vol.: Alexander S. Merkurjev's sixtieth birthday (2015),
31--70.

\bibitem{beauville}
A. Beauville,
Determinantal hypersurfaces,
Michigan Math. J. 48 (2000), 39--64.

\bibitem{cadman}
C. Cadman,
Using stacks to impose tangency conditions on curves,
Amer. J. Math. 129 (2007), 405--427.

\bibitem{CTO}
J.-L. Colliot-Th\'el\`ene and M. Ojanguren,
Vari\'et\'es unirationnelles non rationnelles:
au-del\`a de l'exemple d'Artin et Mumford,
Invent. Math. 97 (1989), 141--158.

\bibitem{ferrand}
D. Ferrand,
Un foncteur norme,
Bull. S. M. F. 126 (1998), 1--49.

\bibitem{vangeemen}
B. van Geemen,
Some remarks on Brauer groups of $K3$ surfaces,
Adv. Math. 197 (2005), 222--247.

\bibitem{giraud}
J. Giraud,
\emph{Cohomologie non ab\'elienne},
Springer-Verlag, Berlin, 1971.

\bibitem{EGA2}
A. Grothendieck,
\'El\'ements de g\'eom\'etrie alg\'ebrique, II:
\'Etude globale \'el\'ementaire de quelques classes de morphismes,
Publ. Math. IHES 8 (1961).

\bibitem{EGA4}
A. Grothendieck,
El\'ements de g\'eom\'etrie alg\'ebrique IV,
Publ. Math. IHES 20, 24, 28, 32, (1964--67).



\bibitem{hartshorne}
R. Hartshorne,
Stable reflexive sheaves,
Math. Ann. 254 (1980), 121--176.

\bibitem{HPT}
B. Hassett, A. Pirutka, and Yu. Tschinkel,
Stable rationality of quadric surface bundles over surfaces,
Acta Math. 220 (2018), 341--365.

\bibitem{kahnformesquadratiques}
B. Kahn,
\emph{Formes quadratiques sur unn corps},
Soc. Math. France, Paris, 2008.

\bibitem{KO}
M.-A. Knus and M. Ojanguren,
A norm for modules and algebras,
Math. Z. 142 (1975), 33--45.

\bibitem{involution} A. Kresch and Yu. Tschinkel,
Involution surface bundles over surfaces, Math. Z. 296 (2020), 1081--1100.

\bibitem{kreschtschinkelbssurf}
A. Kresch and Yu. Tschinkel,
Models of Brauer-Severi surface bundles,
Moscow Math. J. 19 (2019), 549--595.

\bibitem{LMB}
G. Laumon and L. Moret-Bailly,
\emph{Champs Alg\'ebriques},
Springer-Verlag, Berlin, 2000.

\bibitem{milne}
J. S. Milne,
\emph{\'Etale Cohomology},
Princeton Univ. Press, Princeton, 1980.

\bibitem{oesinghaus}
J. Oesinghaus,
Conic bundles and iterated root stacks,
Eur. J. Math. 5 (2019), 518--527.

\bibitem{raynaudgruson}
M. Raynaud and L. Gruson,
Crit\`eres de platitude et de projectivit\'e,
Invent. Math. 13 (1971), 1--89.

\bibitem{schreieder}
S. Schreieder,
Quadric surface bundles over surfaces and stable rationality,
Algebra Number Theory 12 (2018), 479--490.

\bibitem{staude}
O. Staude,
Flächen 2. Ordnung und ihre Systeme und Durchdringungskurven,
in Enzykl. der Mathem. Wissenschaften III $2_{\mathrm{I}}$,
B. G. Teubner Verlag, Leipzig, 1903--1915, pp. 161--256.



\bibitem{voisin}
C. Voisin,
Th\'eor\`eme de Torelli pour les cubiques de ${\mathbf P}^5$,
Invent. Math. 86 (1986), 577--601.

\end{thebibliography}
\end{document}